\theoremstyle{plain}
\newtheorem{theorem}{Theorem}[section]
\newtheorem{lem}[theorem]{Lemma}
\newtheorem{prop}[theorem]{Proposition}
\theoremstyle{definition}
\newtheorem{exa}[theorem]{Example}
\newtheorem{obs}[theorem]{Remark}
\numberwithin{equation}{section}
\newcommand{\mc}{C(\lambda, p)}
\newcommand{\MCO}{C_d^o(\lambda,p)}
\newcommand{\MCOd}{C_2^o(\lambda,p)}
\newcommand{\MCI}{C_d^i(\lambda,p)}
\begin{document}

\baselineskip=18pt

\title[Extinction time in growth models subject to geometric catastrophes]{Extinction time in growth models subject to geometric catastrophes}

\author[Valdivino Vargas Junior]{Valdivino Vargas Junior}
\address[Valdivino Vargas Junior]{Institute of Mathematics and Statistics, Federal University of Goias, Campus Samambaia, 
CEP 74001-970, Goi\^ania, GO, Brazil}
\email{vvjunior@ufg.br}
\thanks{F\'abio Machado was supported by CNPq (303699/2018-3) and Fapesp (17/10555-0) and Alejandro Roldan by Universidad de Antioquia.}

\author[F\'abio P. Machado]{F\'abio~Prates~Machado}
\address[F\'abio P. Machado]{Statistics Department, Institute of Mathematics and Statistics, University of S\~ao Paulo, CEP 05508-090, S\~ao Paulo, SP, Brazil.}
\email{fmachado@ime.usp.br}

\author[Alejandro Rold\'an]{Alejandro~Rold\'an-Correa}
\address[Alejandro Rold\'an]{Instituto de Matem\'aticas, Universidad de Antioquia, Calle 67, no 53-108, Medellin, Colombia}
\email{alejandro.roldan@udea.edu.co}


\keywords{Branching processes, catastrophes, population dynamics}
\subjclass[2010]{60J80, 60J85, 92D25}
\date{\today}

\begin{abstract}
Recently, different dispersion strategies in population models subject to geometric catastrophes have been considered as strategies to improve the chance of po\-pu\-lation's survival. Such dispersion strategies have been contrasted with the strategy where there is no dispersion, comparing the probabilities of survival. In this article, we contrast survival strategies when extinction occurs almost surely, evaluating which strategy prolongs population's life span. Our results allow one to analyze what is the best strategy based on parameters as the probability that each individual exposed to catastrophe survives, the growth rate of the colony, the type of dispersion and the spatial restrictions. 
\end{abstract}

\maketitle

\section{Introduction}
\label{S: Introduction}

When a catastrophe strikes a population, its size is reduced according to some probabi\-li\-ty law. The dispersion of the survivors, considering the spatial restrictions, is a strategy that could help the population to increase its viability. These are the biological and environmental forces that influence the chances of survival. Various stochastic models to represent population growth dynamics subject to catastrophes has been proposed. 

Artalejo~\textit{et al}~\cite{AEL2007} and Brockwell~\cite{B1986, BGR1982}  analyze models for po\-pu\-lations that after catastrophes, the survivors individuals remain together in the same colony. Schinazi~\cite{S2014}, Machado~\textit{et al}~\cite{MRS2015} and Junior~\textit{et al}~\cite{JMR2016,JMR2020, MRV2018} studied models for populations that after catastrophes, individuals disperse trying to make new colonies to improve the odds of their species survival. In all these works, different types of catastrophes and different dispersion strategies were considered. Dispersion holds a central role for both the dynamics and evolution of spatially structured populations. While it can save a small population from local extinction, it also can increase global extinction risk if observed in a very high level. See Ronce~\cite{R2007} for more about dispersal in the biological context.

Recently, Junior \textit{et al}~\cite{JMR2020} analyzed different dispersion strategies in populations subject to geometric catastrophes, to study how these strategies impact the population viability, comparing them with the strategy where there is no dispersion.  Their analysis points to establish which is the best strategy (dispersion or no dispersion), based on the survival probability of the population when some strategy is adopted. For this, at least one of the compared models (model with dispersion and model without dispersion) has to have survival probability greater than zero. However, when both models have the survival probability equal zero the analysis does not give information regarding to the best strategy. In this work, we propose to evaluate which strategy is better when extinction occurs almost surely, considering the mean extinction times. The extinction time of a population is of particular importance  in view of its relevance to the estimation of \lq\lq minimum viable population size\rq\rq\ to guarantee survival for a certain time, see Brockwell~\cite{B1986}.

In Section 2 we present the non dispersion model  proposed in Artalejo~\textit{et al}~\cite{AEL2007} and the models with dispersion proposed in Junior~\textit{et al}~\cite{JMR2020}. Moreover, we added new results for these models. In Section 3 we discuss dispersal strategies for increasing life expectancy. Finally, in Section 4 we prove the results presented in Sections 2.

\section{Models and Results}

\subsection{Geometric catastrophe}

Populations are frequently exposed to catastrophic events that cause massive elimination of their individuals, for example, habitat destruction, environmental disaster, epidemics, etc. A catastrophe can instantly wipe out the entire population or just a part of it. In order to model such events, it is assumed that when a population is hit by a catastrophe, its size is reduced according to some law of probability. For catastrophes that reach the individuals sequentially and the effects of a disaster stop as soon as the first individual survives, if there is any survivor, the appropriate model assume a geometric probability law. That is, if at a catastrophe time the size of the population is $i$, it is reduced to $j$ with probability 
\[ \mu_{ij} = \left\{\begin{array}{ll} (1-p)^i, & j=0\\ p(1-p)^{i-j}, & 1\leq j \leq i,\end{array}\right.\]
where $0<p<1$. The form of $\mu_{ij}$ represents what is called \textit{geometric catastrophe}.

The geometric catastrophe would correspond to cases where the decline in the population is halted as soon as any individual survives the catastrophic event. This
may be appropriate for some forms of catastrophic
epidemics or when the  catastrophe has a sequential propagation effect like in the predator-prey models - the predator kills prey until it becomes satisfied. More examples can be found in Artalejo \textit{et al}~\cite{AEL2007}, Cairns and Pollett~\cite{CairnsPollett}, Economou and Gomez-Corral~\cite{EG2007}, Thierry Huillet~\cite{TH2020} and Kumar \textit{et al}~\cite{KBG2020}.

\subsection{Growth model without dispersion}
Artalejo \textit{et al}~\cite{AEL2007} present a model for a population which sticks together in one colony, without dispersion.  That colony gives birth to new individuals at rate $\lambda>0$,  while geometric catastrophes happen at rate $\mu$.

The population size (number of individuals in the colony) at time $t$ is a continuous time Markov process $\left\{X(t):t\geq 0\right\}$ that we denote by $\mc$. We assume $\mu=1$ and $X(0)=1$. \\
Artalejo \textit{et al}~\cite{AEL2007} use the word \textit{extinction} to describe the event that $X(t) = 0$, for some $t>0$, for a process where state 0 is not an absorbing state. In fact the extinction time here is the first hitting time to the state 0,  
$$\tau_A:=\inf\{t>0:X(t)=0\}.$$ The probability of extinction of $\mc$ is denoted by $\psi_A=\mathbb{P}[\tau_A<\infty].$ Its complement, $1-\psi_A$, is called survival probability. The next result establishes the mean time of extinction for $\mc$.

\begin{theorem}[Artalejo \textit{et al}~\cite{AEL2007}]
\label{th:semdisptime}
For the process $\mc$,     
$$\mathbb{E}[\tau_A]=\left\{
\begin{array}{cl}
\displaystyle\frac{1}{1-p-\lambda p},& \text{if }  p< \displaystyle\frac{1}{\lambda+1};\\ \\
\infty, & \text{if }  p\geq \displaystyle\frac{1}{\lambda+1}.
\end{array}\right.
$$
\end{theorem}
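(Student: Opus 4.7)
The plan is to compute $N_i := \mathbb{E}_i[\tau_A]$ (with $N_0=0$) by a first-step analysis for the continuous-time Markov chain $\mc$. From state $i\geq 1$ a state-changing event occurs at rate $\lambda+1-p$: either a birth to $i+1$ (with relative weight $\lambda$) or a geometric catastrophe dropping the state to some $j\in\{0,1,\dots,i-1\}$ (with relative weight $\mu_{ij}$). Standard CTMC theory then says that $(N_i)_{i\geq 0}$ is the minimal non-negative solution of
\[ (\lambda+1-p)\,N_i \;=\; 1 + \lambda\,N_{i+1} + p\sum_{j=1}^{i-1}(1-p)^{i-j}N_j, \qquad i\geq 1, \]
and the goal is to extract $N_1$.

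To eliminate the convolution tail I would subtract $(1-p)$ times the $i$-th equation from the $(i+1)$-th; the sum telescopes down to a single extra term, and after collecting what is left, the resulting identity for $D_i:=N_{i+1}-N_i$ simplifies to the affine first-order recurrence
\[ D_{i+1} \;=\; \alpha\,D_i \;-\; \frac{p}{\lambda}, \qquad \alpha \;:=\; \frac{(1-p)(\lambda+1)}{\lambda}. \]
Its fixed point (for $\alpha\neq 1$) is $D^{*}=p/(1-p(\lambda+1))$, and the general solution is $D_i=D^{*}+\alpha^{i-1}(D_1-D^{*})$. The $i=1$ case of the original recursion also pins down $\lambda D_1=(1-p)N_1-1$, so prescribing $D_1$ fixes both $N_1$ and the entire sequence $(N_i)$.

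The last step uses minimality to identify the correct $D_1$ in each regime. If $p<1/(\lambda+1)$, then $\alpha>1$ and $D^{*}>0$; any choice $D_1<D^{*}$ sends $D_i\to-\infty$ and forces $N_i$ negative for large $i$, while any $D_1>D^{*}$ gives a strictly larger (still non-negative) solution. The minimal non-negative solution is therefore the constant one $D_i\equiv D^{*}$; substituting into $\lambda D_1=(1-p)N_1-1$ and simplifying yields $N_1=1/(1-p-\lambda p)$, matching the claimed formula. If instead $p\geq 1/(\lambda+1)$, then $\alpha\leq 1$ and $D^{*}\leq 0$ (with $D_i$ decreasing arithmetically to $-\infty$ when $\alpha=1$), so $D_i\to D^{*}\leq 0$ irrespective of $D_1$. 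The partial sums $N_i=N_1+D_1+\cdots+D_{i-1}$ then cannot remain non-negative, so no finite non-negative solution exists and $N_1=\infty$.

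The principal obstacle will be the reduction step: finding the linear combination of two consecutive equations that cancels the convolution tail, and recognizing the factorization $(1-p)(\lambda+1-p)+p(1-p)=(1-p)(\lambda+1)$ that produces the clean value of $\alpha$. Once the affine recurrence and its fixed point are in place, the minimality argument that separates the two regimes and matches the threshold $p=1/(\lambda+1)$ is routine.
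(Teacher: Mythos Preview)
The paper does not supply its own proof of this theorem: it is quoted from Artalejo \textit{et al.}~\cite{AEL2007} and used as input. The machinery the paper does develop in Section~4 (Narayan's integral formula in Lemma~\ref{lemaux}) is tailored to the dispersion models $\MCO$ and $\MCI$, which are genuine continuous-time branching processes; the non-dispersion chain $\mc$ is not of that form, so that route would not apply directly here anyway.

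Your first-step analysis stands on its own and is correct. The holding rate $\lambda+1-p$ and the system for $N_i$ are right; the $(1-p)$-shift that collapses the geometric tail is exactly the right trick and yields the clean recursion $D_{i+1}=\alpha D_i-p/\lambda$ with $\alpha=(1-p)(\lambda+1)/\lambda$; the fixed point $D^{*}=p/(1-p(\lambda+1))$ and the relation $\lambda D_1=(1-p)N_1-1$ combine to give $N_1=1/(1-p-\lambda p)$ when $D_1=D^{*}$. The minimality argument separating the two regimes is also sound: for $p<1/(\lambda+1)$ one has $\alpha>1$, so any $D_1<D^{*}$ forces $N_i\to-\infty$ while $D_1=D^{*}$ gives a positive arithmetic sequence, pinning down the minimal non-negative solution; for $p\ge 1/(\lambda+1)$ every finite choice of $D_1$ eventually makes $N_i$ negative, so $N_1=\infty$. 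The only cosmetic point is that at the boundary $p=1/(\lambda+1)$ the fixed point $D^{*}$ is undefined rather than ``$\le 0$'', but your parenthetical already handles that case correctly.
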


\subsection{Growth models with dispersion and spatial restriction}
	Let $\mathbb{T}_d^+$ be an infinite rooted tree whose vertices
	have degree $d+1$, except the root that has degree $d$. Let us define a process with 
	dispersion on $\mathbb{T}_d^+$, starting from a single colony placed at the root of $\mathbb{T}_d^+$, with just one individual. The number of individuals in a colony grows following a Poisson process of rate $\lambda>0$. To each colony 
	we associate an exponential time of mean 1 that indicates when the geometric catastrophe strikes a colony. The individuals that survived the catastrophe are dispersed between the $d$ neighboring vertices furthest from the root to create new colonies. Among the survivors that go to the same vertex to create a new colony at it, only one succeeds, the others die. So in this case  when a catastrophe occurs in a colony, that colony is replaced by 0,1, ... or $ d $ colonies. We consider two types of dispersion:
	\begin{itemize}
		\item \textbf{Optimal dispersion:} 
		Individuals are distributed in a ordered fashion, from left to right, in order to create the largest possible number of new colonies. If $r$ individuals survive to a catastrophe, then the number of colonies that are created equals $\min\{r,d\}$. Let us denote the process with optimal dispersion by $\MCO$.
		\item 
		\textbf{Independent dispersion:} Each one of the individuals that
		survived the catastrophe picks
		randomly a neighbor vertex and tries to create a new colony at it.
		When the amount of survivors is $r$, the probability of having $y \le \min\{d,r\}$ vertices colonized
		is
		\[ \frac{T(r,y)}{d^r} {{d}\choose{y}}, \]
		where $T(r, y)$  denote the number of surjective functions $f:A \to B$, with $|A| = r$ and
		$|B| = y$.  Let us denote the process with independent dispersion by $\MCI$.
	\end{itemize}

	$\MCO$ and $\MCI$ are continuous-time Markov processes with state space  $\mathbb{N}_0^{\mathbb{T}^d}$. For each particular realization of these processes, we say that it {\it survives} if for any instant of time there is at least one colony somewhere. Otherwise, we say that it {\it dies out }.

	\begin{theorem}[Junior \textit{et al} \cite{JMR2020}]\label{MCOeMCI} Let $\psi_d^o$ and $\psi_d^i$ the extinction probabilities for the processes $\MCO$ and $\MCI$, respectively. Then
	\begin{itemize}
		\item[$(i)$] $\psi_2^o < 1 $ if and only if $p > \frac{1}{\lambda+1}.$
		\item[$(ii)$] $\psi_3^o < 1 $ if and only if
		$p > \frac{\lambda + 1}{2 \lambda^2 + 2 \lambda + 1}$.
		\item[$(iii)$] $\psi_2^i < 1 $ if and only if
		$p > \frac{\lambda + 2}{\lambda^2 + 2 \lambda + 2}.$
		\item[$(iv)$] $\psi_3^i < 1 $ if and only if
		$ p > \frac{\lambda + 3}{2\lambda^2 + 3 \lambda + 3}.$
	\end{itemize}
	\end{theorem}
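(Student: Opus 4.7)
The plan is to encode each of the two tree processes as a discrete-time Galton-Watson branching process, indexed by successive generations of colonies, and then reduce the four claims to a single mean-offspring computation. A colony is born with one individual at some vertex of $\mathbb{T}_d^+$, grows by a Poisson process of rate $\lambda$ for an independent $\mathrm{Exp}(1)$ time, and at the catastrophe instant produces a random number $N$ of child colonies placed at distinct descendent vertices of the tree. Since the children of different colonies live on disjoint subtrees, the future evolutions are i.i.d.\ copies of the original; hence the family tree of colonies is a Galton-Watson tree with offspring variable $N$, and extinction of $\MCO$ (respectively $\MCI$) coincides with extinction of the associated Galton-Watson process. Because $\mathbb{P}(N=0)$ is bounded below by the probability that every individual perishes at the first catastrophe, which is positive, the classical Galton-Watson dichotomy gives $\psi_d^o<1$ (resp.\ $\psi_d^i<1$) if and only if $\mathbb{E}[N]>1$, so each item of the theorem becomes a single algebraic inequality in $p$.

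To set up the computation, let $I$ denote the colony size at the moment of the catastrophe and $R$ the number of survivors. Conditioning on the catastrophe clock and using that a $\mathrm{Poisson}(\lambda T)$ variable with $T\sim\mathrm{Exp}(1)$ is geometric yields
\[ \mathbb{P}(I=i)=\frac{\lambda^{i-1}}{(1+\lambda)^{i}},\qquad i\ge 1, \]
while the geometric catastrophe law gives $\mathbb{P}(R=0\mid I=i)=(1-p)^i$ and $\mathbb{P}(R=r\mid I=i)=p(1-p)^{i-r}$ for $1\le r\le i$. Summing over $i$ produces the two transforms I would use repeatedly,
\[ \mathbb{E}\bigl[(1-p)^I\bigr]=\frac{1-p}{1+\lambda p},\qquad \mathbb{E}\bigl[q^I\bigr]=\frac{q}{1+\lambda(1-q)}\quad\text{for } 0<q<1. \]

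For the optimal dispersion case $N=\min(R,d)$, so $\mathbb{E}[N]=\sum_{k=1}^{d}\mathbb{P}(R\ge k)$, and a direct double-geometric sum gives $\mathbb{P}(R\ge k)=\frac{p(1+\lambda)}{1+\lambda p}\bigl(\tfrac{\lambda}{1+\lambda}\bigr)^{k-1}$; substituting $d=2,3$ and rearranging $\mathbb{E}[N]>1$ into a polynomial inequality in $p$ reproduces items (1) and (2). For the independent dispersion case I would use linearity of expectation over the $d$ descendent vertices: since each survivor picks a vertex uniformly and independently, the probability that a fixed vertex stays empty is $\mathbb{E}[(1-1/d)^R]$, so
\[ \mathbb{E}[N]=d\,\Bigl(1-\mathbb{E}\bigl[q^R\bigr]\Bigr),\qquad q=\frac{d-1}{d}. \]
The remaining ingredient is a closed form for $\mathbb{E}[q^R]$: conditioning on $I=i$ and summing the finite geometric series in $r$ gives
\[ \mathbb{E}[q^R\mid I=i]=(1-p)^i+\frac{pq\bigl[(1-p)^i-q^i\bigr]}{(1-p)-q}, \]
which I would average over $I$ using the two transforms above.

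I expect the main obstacle to be the algebraic simplification in the independent case: the intermediate expressions for $\mathbb{E}[q^R]$ are not pretty, but the key observation is that after cancellation the mean collapses to the clean formula
\[ \mathbb{E}[N]=\frac{d\,p\,(1+\lambda)^2}{(1+\lambda p)(d+\lambda)}. \]
From here $\mathbb{E}[N]>1$ rearranges to $p\,[\,d+d\lambda+(d-1)\lambda^2\,]>d+\lambda$, and substituting $d=2,3$ yields the thresholds of items (3) and (4). Everything else is a routine combination of the Galton-Watson survival criterion with the two elementary generating-function identities above.
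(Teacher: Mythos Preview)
The paper does not prove this theorem; it is quoted from Junior \textit{et al.}~\cite{JMR2020}. Your argument is correct, and it rests on exactly the branching-process identification the paper itself uses in the proofs of Theorems~\ref{MCOtime} and~\ref{MCItime}: the number of colonies is a (continuous-time) branching process with the offspring law described via $\beta,\alpha,c$ in~\eqref{beta_lambda_c}, and $\psi<1$ iff the mean offspring exceeds $1$.

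Where your write-up differs from the paper's set-up is only in how you compute $\mathbb{E}[N]$. The paper (in the proofs of the later theorems) writes down the full offspring distribution $p_0,\dots,p_d$ in terms of $\beta,\alpha,c$ and would obtain the threshold from $p_1+2p_2(+3p_3)=1$. You bypass the individual $p_k$'s: for optimal dispersion you use $\mathbb{E}[\min(R,d)]=\sum_{k=1}^d\mathbb{P}(R\ge k)$ together with the closed form $\mathbb{P}(R\ge k)=\frac{p(1+\lambda)}{1+\lambda p}\bigl(\frac{\lambda}{1+\lambda}\bigr)^{k-1}$, and for independent dispersion you use the occupancy identity $\mathbb{E}[N]=d\bigl(1-\mathbb{E}[q^R]\bigr)$ with $q=(d-1)/d$. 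Both shortcuts are clean and give the uniform formula $\mathbb{E}[N]=\frac{dp(1+\lambda)^2}{(1+\lambda p)(d+\lambda)}$ in the independent case for all $d$, which is a nice bonus the case-by-case $p_k$ computation does not immediately yield.
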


It is clear that when the survival probability is positive, the mean extinction time for the processes $\MCO$ and $\MCI$ is infinite. In the next results,  we derive the mean extinction time when extinction occurs almost surely, when $d=2$ and $d=3$.  

\begin{theorem}\label{MCOtime}
 Let $\tau_d^o$ the extinction time of the process $\MCO$. 
 \begin{itemize}
 	\item[$(i)$] If $p < \displaystyle\frac{1}{\lambda+1}$, then
 	$\mathbb{E}[\tau_2^o]=
 	\displaystyle\left(1 + \frac{1}{\lambda p}\right)\ln\left(\frac{1-p}{1-p-\lambda p}\right).
 	$\\
 	If $p = \displaystyle\frac{1}{\lambda+1}$, then
 	$\mathbb{E}[\tau_2^o]=\infty.$
 	\item[$(ii)$] If  $p < \displaystyle\frac{\lambda + 1}{2 \lambda^2 + 2 \lambda + 1},$ then  
 	$$\mathbb{E}[\tau_3^o]=
 	\displaystyle
 	\frac{\lambda p +1}{\lambda p} 
 	\sqrt{\frac{p(\lambda+1)}{4+\lambda p-3p}}\,
 	\ln\left[\frac{(2-2p-\lambda p)\sqrt{p(\lambda  +1)}+\lambda p\sqrt{4+\lambda p -3p}}{(2-2p-\lambda p)\sqrt{p(\lambda  +1)}-\lambda p\sqrt{4+\lambda p -3p}}\right].
 	$$
 	If  $p = \displaystyle\frac{\lambda + 1}{2 \lambda^2 + 2 \lambda + 1},$ then  
 	$\mathbb{E}[\tau_3^o]=\infty$.
 \end{itemize}
\end{theorem}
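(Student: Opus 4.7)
The plan is to view $\MCO$ as a continuous-time Markov branching process whose ``individuals'' are colonies. Because every new colony is created at a child-vertex of $\mathbb{T}_d^+$ that has never been colonized before, and evolves independently of everything else thereafter, the genealogy of colonies is a Galton--Watson tree whose vertices have lifetime $\mathrm{Exp}(1)$ and whose offspring count is $\xi=\min\{r,d\}$, where $r$ is the number of individuals that survive a single catastrophe applied to a fresh colony. The first task is to compute $f(u)=\mathbb{E}[u^\xi]$. A colony born with one individual grows as a rate-$\lambda$ Poisson process and is hit by a catastrophe after an independent $\mathrm{Exp}(1)$ time $T$; marginalizing over $T$ shows that its size at the catastrophe instant is geometric on $\{1,2,\dots\}$ with parameter $1/(\lambda+1)$. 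Combining this with the law $\mu_{ij}$ and the rule $\xi=\min\{r,d\}$ gives, after an elementary summation of two geometric series, an explicit rational formula for $f$. As a consistency check, $f'(1)<1$ should cut out exactly the subcritical regions $p<1/(\lambda+1)$ (for $d=2$) and $p<(\lambda+1)/(2\lambda^2+2\lambda+1)$ (for $d=3$) that appear in Theorem~\ref{MCOeMCI}.

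Second, for a Markov branching process with unit death rate and offspring generating function $f$, conditioning on the first catastrophe (or, equivalently, the backward Kolmogorov equation) gives $G'(t)=f(G(t))-G(t)$, $G(0)=0$, for $G(t):=\mathbb{P}(\tau_d^o\le t)$. In the subcritical or critical regime $G(t)\uparrow 1$, so the change of variables $u=G(t)$ converts the mean extinction time into
\[\mathbb{E}[\tau_d^o]=\int_0^\infty(1-G(t))\,dt=\int_0^1\frac{1-u}{f(u)-u}\,du.\]
Since $f(1)=1$, the factor $u-1$ cancels from the denominator and leaves a rational function whose remaining denominator is a polynomial $R(u)$ of degree $d$.

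In the final step one evaluates this rational integral. For $d=2$, $R$ is linear, the integrand is a scalar multiple of $1/\bigl((1-p)-\lambda pu\bigr)$, and the antiderivative is the logarithm that produces the stated formula. For $d=3$, $R$ is quadratic with discriminant $\Delta$ computable as $\lambda^2p(\lambda+1)(4+\lambda p-3p)$, strictly positive throughout the subcritical range, so $R$ has two real roots and the standard antiderivative $\int du/(au^2+bu+c)=(1/\sqrt{\Delta})\ln|(2au+b-\sqrt{\Delta})/(2au+b+\sqrt{\Delta})|$ applies. Evaluating at $u=0,1$, multiplying numerator and denominator by the appropriate conjugates, and using $p(\lambda+1)=A^2$ with $A=\sqrt{p(\lambda+1)}$ and $B=\sqrt{4+\lambda p-3p}$, rearranges the argument of the logarithm into the symmetric form $\bigl((2-2p-\lambda p)A+\lambda pB\bigr)/\bigl((2-2p-\lambda p)A-\lambda pB\bigr)$; the external prefactor $(1+\lambda p)(\lambda+1)/\sqrt{\Delta}$ then collapses to $\bigl(1+\lambda p)/(\lambda p\bigr)\sqrt{p(\lambda+1)/(4+\lambda p-3p)}$, matching the statement. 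At the critical value of $p$ in each case, $f'(1)=1$ forces $f(u)-u$ to vanish to second order at $u=1$, so $(1-u)/(f(u)-u)\sim c/(1-u)$ near $u=1$, the integral diverges, and $\mathbb{E}[\tau_d^o]=\infty$. The main obstacle is the algebraic bookkeeping in the $d=3$ case: the raw output of the quadratic-integral formula must be reshaped, by expanding and regrouping products of the form $\bigl(p(3\lambda+1)\mp AB\bigr)\bigl(p(\lambda+1)\pm AB\bigr)$, into the compact symmetric pair displayed in the statement.
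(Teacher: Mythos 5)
Your proposal is correct and takes essentially the same route as the paper: both treat the colonies of $\MCO$ as a continuous-time Markov branching process with $\mathrm{Exp}(1)$ lifetimes and offspring $\min\{r,d\}$, reduce $\mathbb{E}[\tau_d^o]$ to $\int_0^1\frac{1-u}{f(u)-u}\,du$, and evaluate the resulting linear (for $d=2$) and quadratic (for $d=3$) integrals, with divergence at criticality from the second-order zero of $f(u)-u$ at $u=1$. The only cosmetic differences are that you derive the integral formula from the backward Kolmogorov equation and the survivor distribution from first principles, where the paper cites Narayan and Machado et al., respectively.
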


 \begin{theorem}\label{MCItime}
	Let $\tau_d^i$ the extinction time of the process $\MCI$. 
	\begin{itemize}
		\item[$(i)$] If $ p < \displaystyle\frac{\lambda + 2}{ \lambda^2 + 2 \lambda + 2} $, then 
	$$\mathbb{E}[\tau_2^i]=
	\displaystyle\frac{(\lambda+2)(\lambda p+1)}{\lambda p (\lambda + 1 )}\ln\left(\frac{(1 - p)(\lambda +2)}{ \lambda +2 - p (\lambda^2 + 2\lambda + 2)}\right).
	$$
	If $ p = \displaystyle\frac{\lambda + 2}{ \lambda^2 + 2 \lambda + 2} $, then 
	$\mathbb{E}[\tau_2^i]=\infty.$
	\item[$(ii)$] If   $p < \displaystyle\frac{\lambda + 3}{ 2\lambda^2 + 3 \lambda + 3}$, then

	$$\mathbb{E}[\tau_3^i]=
	\displaystyle\frac{( \lambda p + 1)(2 \lambda + 3)(\lambda + 3)}{2h(\lambda, p)}\ln\left[\frac{g(\lambda, p)+h(\lambda, p)}{g(\lambda, p)-h(\lambda, p)}\right], 
	$$
	where 
	\begin{equation}\label{function_g}
	g(\lambda, p)=(\lambda +3)(2\lambda+3-3\lambda p-3p-\lambda^2p),
	\end{equation}
	and
	\begin{equation}\label{function_h}
	h(\lambda, p)=\lambda \sqrt{p(\lambda +1)(p\lambda^2+4\lambda+6-3p)(\lambda +3)}.
	\end{equation}
	If   $p = \displaystyle\frac{\lambda + 3}{ 2\lambda^2 + 3 \lambda + 3}$, then $\mathbb{E}[\tau_3^i]=\infty.$
	
   \end{itemize}
\end{theorem}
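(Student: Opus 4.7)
The plan is to realise $\MCI$ as a continuous-time Galton--Watson process at the colony level and to read $\mathbb{E}[\tau_d^i]$ off a standard PGF integral. Because the subtrees rooted at distinct vertices of $\mathbb{T}_d^+$ are disjoint and the growth and catastrophe clocks are independent across colonies, the number $Z(t)$ of live colonies is a Markov branching process whose generic individual has an $\mathrm{Exp}(1)$ lifetime and offspring count $Y_d$, where $Y_d$ counts the children of a vertex at which at least one survivor of the catastrophe lands. Writing $f_d(s)=\mathbb{E}[s^{Y_d}]$ and $F(s,t)=\mathbb{E}[s^{Z(t)}]$, the Kolmogorov backward equation gives $\partial_t F=f_d(F)-F$ with $F(s,0)=s$. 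Setting $y(t)=F(0,t)$ yields $\dot y=f_d(y)-y$, $y(0)=0$, and in the subcritical regime of (i) or (ii) Theorem~\ref{MCOeMCI} forces $y(\infty)=1$, so changing variable from $t$ to $y$ in $\mathbb{E}[\tau_d^i]=\int_0^\infty(1-y(t))\,dt$ produces the master formula
\[
\mathbb{E}[\tau_d^i]=\int_0^1\frac{1-y}{f_d(y)-y}\,dy.
\]

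To evaluate $f_d$ I would stack three computations. (a) The population $N$ just before catastrophe is $1+M$ with $M\sim\mathrm{Geometric}(1/(1+\lambda))$, since a rate-$\lambda$ Poisson process run up to an independent $\mathrm{Exp}(1)$ time is a mixed Poisson of geometric type; hence $\phi(s):=\mathbb{E}[s^N]=s/(1+\lambda(1-s))$. (b) The geometric-catastrophe weights give $\mathbb{E}[a^R\mid N=n]=[(a-1)(1-p)^{n+1}+p\,a^{n+1}]/[a-(1-p)]$, and averaging against $\phi$ collapses, after cancellation of the apparent pole at $a=1-p$, to
\[
\mathbb{E}[a^R]=\frac{(1+\lambda)(1-p)+a\bigl(p(1+2\lambda)-\lambda\bigr)}{(1+\lambda p)\bigl(1+\lambda(1-a)\bigr)}.
\]
(c) Inclusion--exclusion on the indicators of occupied children turns the surjection count in the definition of $\MCI$ into $\mathbb{E}[s^{Y_d}\mid R=r]=\sum_{k=0}^{d}\binom{d}{k}(1-k/d)^{r}(1-s)^{k}s^{d-k}$; averaging over $R$ yields
\[
f_d(s)=\sum_{k=0}^{d}\binom{d}{k}(1-s)^{k}s^{d-k}\,\gamma_{d-k},\qquad \gamma_\ell:=\mathbb{E}[(\ell/d)^R].
\]

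Because $f_d(1)=1$, one factors $f_d(y)-y=(1-y)Q_d(y)$ with $\deg Q_d=d-1$, reducing the master formula to $\mathbb{E}[\tau_d^i]=\int_0^1 Q_d(y)^{-1}\,dy$. For $d=2$, $Q_2$ is linear and the resulting elementary logarithm matches (i) once the explicit $\gamma_0=(1-p)/(1+\lambda p)$ and $\gamma_1=(\lambda+2-p)/[(1+\lambda p)(\lambda+2)]$ are substituted and the identity $1-2\gamma_1+\gamma_0=\lambda p(\lambda+1)/[(\lambda+2)(\lambda p+1)]$ is verified by direct algebra. For $d=3$, $Q_3(y)=Ay^2+By+C$ is a genuine quadratic, and when $\Delta:=B^2-4AC>0$ the primitive rationalises to
\[
\int_0^1\frac{dy}{Ay^2+By+C}=\frac{1}{\sqrt{\Delta}}\ln\frac{B+2C+\sqrt{\Delta}}{B+2C-\sqrt{\Delta}};
\]
matching with (ii) then amounts to checking that $\sqrt{\Delta}=2h(\lambda,p)/[(\lambda p+1)(2\lambda+3)(\lambda+3)]$ and $B+2C=2g(\lambda,p)/[(\lambda p+1)(2\lambda+3)(\lambda+3)]$ for $g,h$ as in~\eqref{function_g}--\eqref{function_h}.

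The main obstacle is this algebraic verification for $d=3$: after plugging the three rational expressions for $\gamma_0,\gamma_1,\gamma_2$ into $A,B,C$, one must show that the discriminant $B^2-4AC$ is a perfect square whose radicand equals $p(\lambda+1)(p\lambda^2+4\lambda+6-3p)(\lambda+3)$ (which also justifies $\Delta>0$ throughout the subcritical range), and that $B+2C$ is the stated multiple of $g(\lambda,p)$. The divergence statements at the critical thresholds in (i) and (ii) are then automatic: $Q_d(1)=d\gamma_{d-1}-(d-1)=1-f_d'(1)$ vanishes exactly at the critical $p$ of Theorem~\ref{MCOeMCI}, producing a logarithmic singularity of $Q_d^{-1}$ at $y=1$.
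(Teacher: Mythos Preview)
Your proposal is correct and follows essentially the same route as the paper: identify the colony count as a continuous-time branching process with $\mathrm{Exp}(1)$ lifetimes, invoke the Narayan integral $\mathbb{E}[\tau]=\int_0^1\frac{1-y}{f(y)-y}\,dy$, factor out $(1-y)$, and integrate the remaining linear or quadratic in closed form. The only cosmetic difference is that the paper packages the integral formula as a separate lemma (citing Narayan) and computes the offspring probabilities $p_k$ directly via the surjection counts $T(n,k)$ and the parameters $\beta,\alpha,c$, whereas you rederive the integral from the backward equation and organise the offspring PGF through the occupancy moments $\gamma_\ell=\mathbb{E}[(\ell/d)^R]$; these are algebraically equivalent, and both leave the final substitution in $(\lambda,p)$ as a routine (if tedious) verification.
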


\begin{obs}\label{otimo-infinito}
Theorems \ref{MCOtime} and \ref{MCItime} show explicitly the formulas for the mean extinction
times of the processes $\MCO$ and $\MCI$ for $d = 2, 3.$ 
Observe that for $d=\infty$ the only model that makes sense is the optimal model, which corresponds to the scheme with no spatial restriction where each individual that survives a catastrophe, creates its own new
colony independently of everything else. This model is presented in the next section.
\end{obs}

\subsection{Growth model with dispersion but no spatial restrictions.} 
Consider a population of individuals divided into separate colonies. Each colony begins with  
an individual. The number of individuals in each colony increases independently according 
to a Poisson process of rate $\lambda > 0 $.
To each colony we associate an exponential time of mean 1 that indicates when the geometric catastrophe strikes a colony. Each individual that survived the catastrophe 
begins a new colony independently of everything else.
We denote this process by $C_*(\lambda,p)$ and consider it starting from a single colony with just one
individual.\\

For each particular realization of $C_*(\lambda,p)$, we say that it {\it survives} if for any instant of time there is at least one colony somewhere. Otherwise, we say that it {\it dies out.} 
	We denoted by $\psi_*$, the probability of extinction of $C_*(\lambda,p).$ Junior \textit{et al} \cite[Theorem 2.3]{JMR2016} showed that $\psi_*<1$ if and only if $p>\displaystyle\frac{1}{\lambda^2+\lambda+1}$. 
	\\It is clear that when $\psi_*<1$, the mean extinction time of $C_*(\lambda,p)$ is infinite.
	The following theorem establishes the mean time of extinction for $C_*(\lambda,p)$ when $\psi_*=1.$  
	\begin{theorem}\label{th:comdisptime}
		Let $\tau_*$ the extinction time of the process $C_*(\lambda,p)$. Then      
		$$\mathbb{E}[\tau_*]=\left\{
		\begin{array}{ccl}
		\displaystyle1- \frac{p(\lambda+1)^2}{\lambda(\lambda p +1)}\ln\left[1-\frac{\lambda(\lambda p+1)}{(\lambda+1)(1-p)}\right]&,& \text{if }   p<\displaystyle\frac{1}{\lambda^2+\lambda+1};\\ \\
		\infty&, & \text{if }  p\geq\displaystyle\frac{1}{\lambda^2+\lambda+1}.
		\end{array}\right.
		$$
\end{theorem}

Next we compare the dispersal strategies of the models $\MCO$, $\MCI$ and
$C_*(\lambda,p)$ (with spatial dispersion) and $\mc$ (without dispersion) for increasing life expectancy.

\section{Discussion}

\subsection{Non-dispersion vs dispersion with spatial restriction.\\}

By coupling arguments one can see that the extinction times,  $\tau_d^o$ and $\tau_d^i$, are a non-decreasing functions of $d$, $\lambda$ and $p$. Moreover, as the optimal dispersion (due to spatial restriction) maximizes the number of new colonies whenever there are individuals that survived from the latest catastrophe, that type of dispersion is the one which maximizes the extinction time. Thus, $$\mathbb{E}[\tau_d^o]\geq \mathbb{E}[\tau_d^i].$$

Junior \textit{et al} \cite{JMR2020} compute explicitly the extinction probabilities ($\psi_2^o$, $\psi_3^o$, $\psi_2^i$, and $\psi_3^i$) as functions of $\lambda$ and $p$. In particular, they showed that extinction probabilities for the models $\mc$ and $\MCOd$ are equal ($\psi_A=\psi_2^o$).  An interesting question is to determine whether,
when the models $\mc$ and $\MCOd$ die out almost surely, dispersion is an advantage or not to extend the population's life span. This question is answered by the following proposition.

\begin{prop}\label{optimo2}
	Assume $p<\frac{1}{\lambda+1}$.  Then,
	$\mathbb{E}[\tau_A]< \mathbb{E}[\tau_2^o]$ if and only if   
	\begin{equation}\label{eq:optimo2}
	\frac{\lambda p}{(1-p-\lambda p)(1+\lambda p)}<\ln\left(\frac{1-p}{1-p-\lambda p}\right).
	\end{equation}
	Moreover, $\mathbb{E}[\tau_A]= \mathbb{E}[\tau_2^o]$ if and only if we have an equality in (\ref{eq:optimo2}).
\end{prop}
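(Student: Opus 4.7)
The plan is to obtain the proposition by a direct algebraic rearrangement of the two known formulas for $\mathbb{E}[\tau_A]$ and $\mathbb{E}[\tau_2^o]$. Under the hypothesis $p<1/(\lambda+1)$, Theorem~\ref{th:semdisptime} gives
$$\mathbb{E}[\tau_A]=\frac{1}{1-p-\lambda p},$$
which is finite and positive, and Theorem~\ref{MCOtime}(i) gives
$$\mathbb{E}[\tau_2^o]=\left(1+\frac{1}{\lambda p}\right)\ln\!\left(\frac{1-p}{1-p-\lambda p}\right),$$
also finite and positive (note that the argument of the logarithm exceeds $1$, so the right-hand side is positive). So the first step is to record that both sides of the comparison $\mathbb{E}[\tau_A]<\mathbb{E}[\tau_2^o]$ are well-defined real numbers.

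Next, I would rewrite the factor $1+1/(\lambda p)$ as $(1+\lambda p)/(\lambda p)$ and state the inequality $\mathbb{E}[\tau_A]<\mathbb{E}[\tau_2^o]$ in the form
$$\frac{1}{1-p-\lambda p}<\frac{1+\lambda p}{\lambda p}\,\ln\!\left(\frac{1-p}{1-p-\lambda p}\right).$$
Since $0<p<1/(\lambda+1)<1$ and $\lambda>0$, the quantity $\lambda p/(1+\lambda p)$ is strictly positive, so multiplying both sides by it preserves the direction of the inequality and yields exactly inequality~(\ref{eq:optimo2}). Conversely, the same multiplication (by the reciprocal, which is again positive) shows that (\ref{eq:optimo2}) implies $\mathbb{E}[\tau_A]<\mathbb{E}[\tau_2^o]$; this gives the \emph{if and only if} statement. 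The equality case is handled identically by replacing the strict inequality with an equality throughout.

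There is no genuine obstacle in this argument; the only thing to be careful about is to check the sign of the multiplicative factor $\lambda p/(1+\lambda p)$ in order to justify that the equivalence between the two inequalities is rigorous rather than a one-way implication, and to note that the logarithm is positive so that neither side of (\ref{eq:optimo2}) is an artefact of a sign flip. Once these positivity observations are in place, the proposition follows by a single line of algebra.
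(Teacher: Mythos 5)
Your proof is correct and coincides with the paper's argument: the paper simply remarks that Proposition~\ref{optimo2} ``is a consequence of Theorems~\ref{th:semdisptime} and \ref{MCOtime}$(i)$,'' and the intended content is exactly the substitution of the two closed-form expressions followed by multiplication by the positive factor $\lambda p/(1+\lambda p)$. Your added care about positivity of $1-p-\lambda p$ and of the logarithm is the right (and only) point needing verification, so nothing is missing.
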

 Proposition~\ref{optimo2} is a consequence of Theorems~\ref{th:semdisptime} and \ref{MCOtime}$(i)$. From Proposition~\ref{optimo2} we can conclude that optimal dispersion is a better strategy compared to
 non-dispersion, when the parameters $(\lambda, p)$ fall in the gray region of Figure~\ref{fig:otimo2}. The opposite (non-dispersion is a better strategy than optimal dispersion) holds in the yellow region. Observe that in the white region $\mathbb{E}[\tau_A] = \mathbb{E}[\tau_2^o] = \infty.$ 
 
\begin{figure}[ht]
	\begin{tabular}{ccc}
		$\lambda$ & \parbox[c]{9cm}{\includegraphics[trim={0cm 0cm 0cm 0cm}, clip, width=8.5cm]{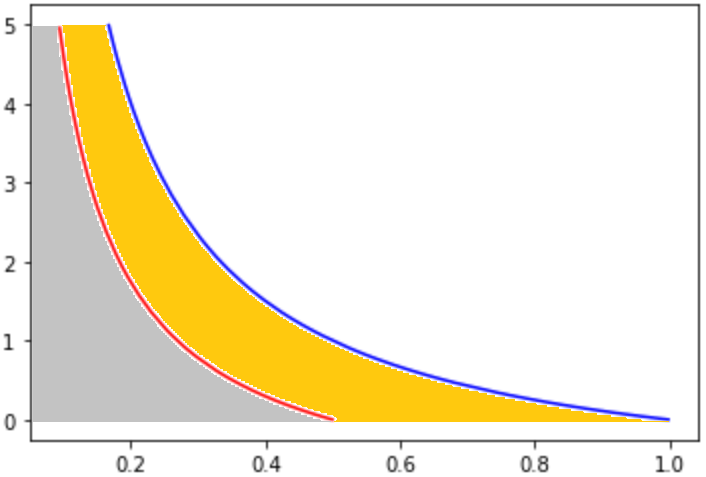}} & 
		\begin{tabular}{l}
			\textbf{\textcolor{red}{------}} Equality in (\ref{eq:optimo2})\\ \\ 
			\textbf{\textcolor{blue}{------}}  $p=\frac{1}{\lambda +1}$
		\end{tabular} \\
		& $p$
	\end{tabular}
	\caption{In the gray region, $\mathbb{E}[\tau_A]<\mathbb{E}[\tau_2^o]$. In the yellow region, $\mathbb{E}[\tau_A]>\mathbb{E}[\tau_2^o]$.}
	\label{fig:otimo2}
\end{figure}

\begin{exa}\label{ex_otimo2}
	The processes $C(1,p)$ and $C_2^o(1,p)$ die out if and only if $p\leq1/2$. 
	In this case, solving (\ref{eq:optimo2}) as an equality, we obtain $p_c\approx 0,269059$ and the following statements. 
 \begin{itemize}
 	\item If $0<p<p_c$, then $\mathbb{E}[\tau_A]<\mathbb{E}[\tau_2^o]$.
 	\item If $p=p_c$, then $\mathbb{E}[\tau_A]=\mathbb{E}[\tau_2^o]$.  
 	\item If $p_c<p<1/2$, then  $\mathbb{E}[\tau_2^o]<\mathbb{E}[\tau_A]$. 
    \item If $p\geq1/2$, then  $\mathbb{E}[\tau_2^i]=\mathbb{E}[\tau_A]=\infty$
\end{itemize}  
Observe that the phase transition in $p$ for $\lambda=1$ occurs for all $\lambda>0.$ For $\lambda\geq5$ (not shown in Figure~\ref{fig:otimo2}), we observe that if $p=\frac{1}{\lambda(\lambda+1)}$, then $\mathbb{E}[\tau_A]<\mathbb {E}[\tau_2^o]$, while if $p=\frac{1}{\lambda+2}$, then $\mathbb{E}[\tau_A]>\mathbb{E}[\tau_2^o] $. 
\end{exa}

The next result considers the Artalejo model and the model with independent dispersion with $d=2$ when both models die out almost surely, more precisely when $ p<\min\left\{\frac{1}{\lambda+1},\frac{\lambda + 2}{ \lambda^2 + 2 \lambda + 2}\right\}=\frac{1}{\lambda+1}.$ 

\begin{prop}\label{indepte2} Assume  $p < \frac{1}{\lambda+1}.$ Then $\mathbb{E}[\tau_A]<\mathbb{E}[\tau_2^i]$ if and only if	
	\begin{equation}\label{eq:indepte2}
	\frac{\lambda p (\lambda + 1 )}{(1-p-\lambda p)(\lambda+2)(\lambda p+1)}<
	\ln\left(\frac{(1 - p)(\lambda +2)}{ \lambda +2 - p (\lambda^2 + 2\lambda + 2)}\right).
	\end{equation}
	
Moreover, $\mathbb{E}[\tau_A]= \mathbb{E}[\tau_2^i]$ if and only if we have an equality in (\ref{eq:indepte2}).
		
\end{prop}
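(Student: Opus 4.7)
The plan is to reduce the statement to a direct algebraic rearrangement of the two closed-form expressions established in Theorems~\ref{th:semdisptime} and \ref{MCItime}$(i)$, together with a small check that both formulas apply in the given regime.

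The first step is to verify the applicability of Theorem~\ref{MCItime}$(i)$ under the hypothesis $p < \frac{1}{\lambda+1}$. Its assumption is $p < \frac{\lambda+2}{\lambda^2 + 2\lambda + 2}$, and this follows from the elementary inequality
$$\frac{1}{\lambda+1} < \frac{\lambda+2}{\lambda^2 + 2\lambda + 2},$$
which after clearing denominators reads $\lambda^2 + 2\lambda + 2 < (\lambda+1)(\lambda+2) = \lambda^2 + 3\lambda + 2$, valid for every $\lambda > 0$. Thus in the regime we consider, both expectations are finite and given by
$$\mathbb{E}[\tau_A] = \frac{1}{1-p-\lambda p}, \qquad \mathbb{E}[\tau_2^i] = \frac{(\lambda+2)(\lambda p + 1)}{\lambda p(\lambda+1)} \ln\left(\frac{(1-p)(\lambda+2)}{\lambda+2 - p(\lambda^2+2\lambda+2)}\right).$$
Also, the hypothesis guarantees $1-p-\lambda p > 0$, and the applicability condition guarantees that the argument of the logarithm is positive, so every quantity in sight is well defined.

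The second step is the manipulation itself. Substituting the two formulas, the inequality $\mathbb{E}[\tau_A] < \mathbb{E}[\tau_2^i]$ reads
$$\frac{1}{1-p-\lambda p} < \frac{(\lambda+2)(\lambda p + 1)}{\lambda p(\lambda+1)} \ln\left(\frac{(1-p)(\lambda+2)}{\lambda+2 - p(\lambda^2+2\lambda+2)}\right).$$
Multiplying both sides by the strictly positive factor $\frac{\lambda p(\lambda+1)}{(\lambda+2)(\lambda p+1)}$ yields exactly inequality~(\ref{eq:indepte2}). Since the multiplier is positive, the step is reversible, giving the stated equivalence; replacing $<$ by $=$ throughout gives the equality clause. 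There is no real obstacle to overcome here, as the proposition is essentially a transcription of the two previously established closed-form formulas under the common domain $p < \frac{1}{\lambda+1}$.
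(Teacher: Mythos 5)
Your proof is correct and follows exactly the route the paper intends: the paper simply states that Proposition~\ref{indepte2} is a consequence of Theorems~\ref{th:semdisptime} and \ref{MCItime}$(i)$, and you supply the (routine) verification — the domain inclusion $\frac{1}{\lambda+1}<\frac{\lambda+2}{\lambda^2+2\lambda+2}$ and the reversible multiplication by the positive factor $\frac{\lambda p(\lambda+1)}{(\lambda+2)(\lambda p+1)}$. Nothing is missing.
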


Proposition~\ref{indepte2} is a consequence of Theorems~\ref{th:semdisptime} and \ref{MCItime}$(i)$. From Proposition~\ref{indepte2} we can conclude that independent dispersion is a better strategy compared to
non-dispersion, when the parameters $(\lambda, p)$ fall in the gray region of Figure~\ref{fig:indepte2}. The opposite (non-dispersion is a better strategy than independent dispersion) holds in the yellow region. Observe that from Theorems~\ref{th:semdisptime} and \ref{MCItime}$(i)$, we also have that:
	\begin{itemize}
		\item If $\frac{1}{\lambda+1}\leq p<\frac{\lambda + 2}{ \lambda^2 + 2 \lambda + 2}$, then  $\mathbb{E}[\tau_2^i] < \infty$ and $\mathbb{E}[\tau_A] = \infty.$
		\item If $ p\geq\frac{\lambda + 2}{ \lambda^2 + 2 \lambda + 2}$, then $\mathbb{E}[\tau_A] = \mathbb{E}[\tau_2^i] = \infty.$
	\end{itemize}	
Furthermore, Junior \textit{et al}~\cite{JMR2020} showed that the extinction probabilities in the white region of Figure~\ref{fig:indepte2} satisfies $\psi_2^i>\psi_A.$ Thus, in the white region, non-dispersion is a better strategy than independent dispersion.

\begin{figure}[ht]
	\begin{tabular}{ccc}
		$\lambda$ & \parbox[c]{9cm}{\includegraphics[trim={0cm 0cm 0cm 0cm}, clip, width=8.5cm]{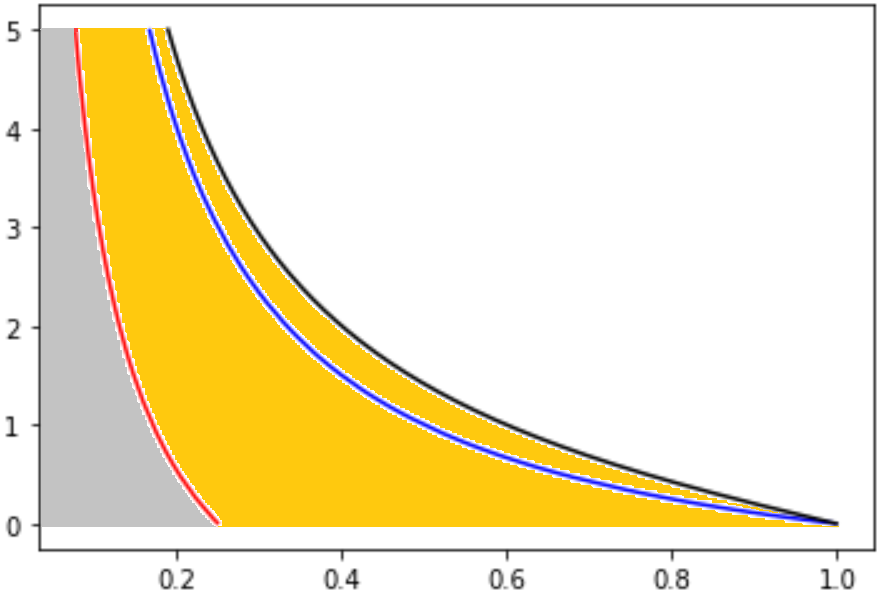}} & 
		\begin{tabular}{l}
			\textbf{\textcolor{red}{------}} Equality in (\ref{eq:indepte2})\\ \\
			\textbf{\textcolor{blue}{------}}  $p=\frac{1}{\lambda +1}$\\ \\
			\textbf{\textcolor{black}{------}} $p=\frac{\lambda + 2}{ \lambda^2 + 2 \lambda + 2}$
		\end{tabular} \\
		& $p$
	\end{tabular}
	\caption{In the gray region, $\mathbb{E}[\tau_A]<\mathbb{E}[\tau_2^i]$. In the yellow region, $\mathbb{E}[\tau_A]>\mathbb{E}[\tau_2^i]$.}
	\label{fig:indepte2}
\end{figure}

\begin{exa}\label{ex_indep2} Both processes, $C(1,p)$ and $C_2^i(1,p)$, die out if and only if $p\leq1/2$. 	
In this case, solving (\ref{eq:indepte2}) as an equality, we obtain $p_c\approx 0.170767$ and the following statements. 
\begin{itemize}
\item If $0<p<p_c$, then $\mathbb{E}[\tau_A]<\mathbb{E}[\tau_2^i]$.
\item If $p=p_c$, then $\mathbb{E}[\tau_A]=\mathbb{E}[\tau_2^i]$.  
\item If $p_c<p<1/2$, then  $\mathbb{E}[\tau_2^i]<\mathbb{E}[\tau_A]<\infty$.
\item If $1/2\leq p< 3/4$, then  $\mathbb{E}[\tau_2^i]<\mathbb{E}[\tau_A]=\infty$.
\item If $p\geq 3/4$, then  $\mathbb{E}[\tau_2^i]=\mathbb{E}[\tau_A]=\infty$.
\end{itemize}  
Observe that the phase transition in $p$ for $\lambda=1$ occurs for all $\lambda>0.$ For $\lambda\geq5$ (not shown in Figure~\ref{fig:indepte2}), we observe that if $p=\frac{1}{\lambda(\lambda+1)}$, then $\mathbb{E}[\tau_A]<\mathbb {E}[\tau_2^i]$, while if $p=\frac{1}{\lambda+2}$, then $\mathbb{E}[\tau_A]>\mathbb{E}[\tau_2^i]$.
\end{exa}	

The next result considers the Artalejo model and the model with optimal dispersion and $d=3$ when both models die out almost surely, more precisely when
$p<\min\left\{\frac{1}{\lambda+1},\frac{\lambda + 1}{2 \lambda^2 + 2 \lambda + 1}\right\}=\frac{\lambda + 1}{2 \lambda^2 + 2 \lambda + 1}.$ 

\begin{prop}\label{optimo3} Assume  $p < \frac{\lambda + 1}{2 \lambda^2 + 2 \lambda + 1}.$  Then,
	$\mathbb{E}[\tau_A]< \mathbb{E}[\tau_3^o]$ if and only if   
	\begin{equation}\label{eq:optimo3}
	\frac{\lambda p\sqrt{4+\lambda p-3p}}{(1-p-\lambda p)(1+\lambda p)\sqrt{p(\lambda+1)}}< 
		\ln\left[\frac{(2-2p-\lambda p)\sqrt{p(\lambda  +1)}+\lambda p\sqrt{4+\lambda p -3p}}{(2-2p-\lambda p)\sqrt{p(\lambda  +1)}-\lambda p\sqrt{4+\lambda p -3p}}\right].
	\end{equation}
Moreover, $\mathbb{E}[\tau_A]= \mathbb{E}[\tau_3^o]$ if and only if we have an equality in (\ref{eq:optimo3}).	
\end{prop}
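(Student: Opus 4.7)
The plan is to derive Proposition~\ref{optimo3} as a direct algebraic consequence of Theorem~\ref{th:semdisptime} and Theorem~\ref{MCOtime}$(ii)$, analogously to how Propositions~\ref{optimo2} and~\ref{indepte2} were obtained from Theorems~\ref{th:semdisptime} and~\ref{MCOtime}$(i)$, \ref{MCItime}$(i)$ respectively.

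First I would check that the hypothesis is compatible with both formulas: since $(\lambda+1)^2 \leq 2\lambda^2+2\lambda+1$ for every $\lambda>0$, one has $\frac{\lambda+1}{2\lambda^2+2\lambda+1}\leq\frac{1}{\lambda+1}$, so $p<\frac{\lambda+1}{2\lambda^2+2\lambda+1}$ automatically forces $p<\frac{1}{\lambda+1}$. By Theorem~\ref{th:semdisptime} we may therefore write $\mathbb{E}[\tau_A]=\frac{1}{1-p-\lambda p}$, which is positive and finite; by Theorem~\ref{MCOtime}$(ii)$, $\mathbb{E}[\tau_3^o]$ is also given by the stated finite expression.

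Next I would verify that all factors in the closed form for $\mathbb{E}[\tau_3^o]$ are positive and, in particular, that the argument of the logarithm is strictly greater than $1$. Setting
\[
A=(2-2p-\lambda p)\sqrt{p(\lambda+1)},\qquad B=\lambda p\sqrt{4+\lambda p-3p},
\]
one has $A,B>0$ since $p<\frac{1}{\lambda+1}<\frac{2}{\lambda+2}$, and a direct expansion shows
\[
A^2-B^2 = 4p\,(2\lambda^2+2\lambda+1)(1-p)\left(\tfrac{\lambda+1}{2\lambda^2+2\lambda+1}-p\right),
\]
which is strictly positive under our hypothesis. Hence $\tfrac{A+B}{A-B}>1$ and the logarithm in the formula for $\mathbb{E}[\tau_3^o]$ is positive. (This factorisation can be verified by checking that $p=1$ and $p=\frac{\lambda+1}{2\lambda^2+2\lambda+1}$ are the two roots of the quadratic $A^2-B^2$ in $p$, using Vieta's formulas.)

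With positivity in hand, the inequality $\mathbb{E}[\tau_A]<\mathbb{E}[\tau_3^o]$ is equivalent to
\[
\frac{1}{1-p-\lambda p} < \frac{\lambda p+1}{\lambda p}\,\sqrt{\frac{p(\lambda+1)}{4+\lambda p-3p}}\,\ln\!\left[\frac{A+B}{A-B}\right],
\]
and multiplying both sides by the positive quantity $\frac{\lambda p}{\lambda p+1}\sqrt{\frac{4+\lambda p-3p}{p(\lambda+1)}}$ yields exactly \eqref{eq:optimo3}. The equality case is handled identically, since every manipulation is a multiplication by a strictly positive quantity. The main (and essentially only) obstacle is the positivity check for $A-B$, which is why I would package it as the factorisation above; everything else is rearrangement.
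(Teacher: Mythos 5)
Your proposal is correct and follows exactly the route the paper intends: the paper gives no further detail beyond stating that the proposition is a consequence of Theorems~\ref{th:semdisptime} and~\ref{MCOtime}$(ii)$, and your argument is precisely that deduction with the positivity checks (in particular the factorisation $A^2-B^2=4p(2\lambda^2+2\lambda+1)(1-p)\bigl(\tfrac{\lambda+1}{2\lambda^2+2\lambda+1}-p\bigr)$, which I verified) made explicit. No gaps.
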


Proposition~\ref{optimo3} is a consequence of Theorems~\ref{th:semdisptime} and \ref{MCOtime}$(ii)$. From Proposition~\ref{optimo3} we can conclude that optimal dispersion is a better strategy compared to non-dispersion, when the parameters $(\lambda, p)$ fall in the gray region of Figure~\ref{fig:otimo3}. The opposite (non-dispersion is a better strategy than optimal dispersion) holds in the yellow region. Observe that from Theorems~\ref{th:semdisptime} and \ref{MCOtime}$(ii)$, we also have that:
	\begin{itemize}
		\item If $\frac{\lambda + 1}{2 \lambda^2 + 2 \lambda + 1}\leq p<\frac{1}{\lambda+1}$, then  $\mathbb{E}[\tau_A] < \infty$ and $\mathbb{E}[\tau_3^o] = \infty.$  
		\item If $ p\geq\frac{1}{\lambda+1}$, then $\mathbb{E}[\tau_A] = \mathbb{E}[\tau_3^o] = \infty.$
	\end{itemize}	
	Junior \textit{et al}~\cite{JMR2020} showed that the extinction probabilities in the white region of Figure~\ref{fig:otimo3} satisfies $\psi_3^o\leq\psi_A.$ Thus, in the white region, optimal dispersion  is a better strategy than non-dispersion.

\begin{figure}[ht]
	\begin{tabular}{ccc}
		$\lambda$ & \parbox[c]{9cm}{\includegraphics[trim={1.1cm 1.3cm 0cm 0cm}, clip, width=9cm]{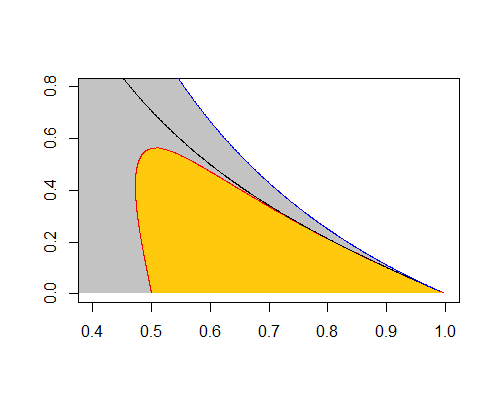}} & 
		\begin{tabular}{l}
			\textbf{\textcolor{red}{------}} Equality in (\ref{eq:optimo3})\\ \\ 
			\textbf{\textcolor{blue}{------}}  $p=\frac{1}{\lambda +1}$\\ \\ 
			\textbf{\textcolor{black}{------}}  $p=\frac{\lambda + 1}{2 \lambda^2 + 2 \lambda + 1}$
		\end{tabular} \\
		& $p$
	\end{tabular}
	\caption{In the gray region, $\mathbb{E}[\tau_A]<\mathbb{E}[\tau_3^o]$. In the yellow region, $\mathbb{E}[\tau_A]>\mathbb{E}[\tau_3^o]$.}
	\label{fig:otimo3}
\end{figure}

\begin{exa}\label{ex_otimo3} Both processes, $C(0.4,p)$ and $C_3^o(0.4,p)$, die out if and only if $p\leq35/53$. 	
	In this case, considering (\ref{eq:optimo3}), we obtain (and define) the critical parameters $p_l$ and $p_u$ such that: 
	\begin{itemize}
		\item If $0<p<p_l$, then $\mathbb{E}[\tau_A]<\mathbb{E}[\tau_3^o]$.
		\item If $p=p_l$, then $\mathbb{E}[\tau_A]=\mathbb{E}[\tau_3^o]$.  
		\item If $p_l<p<p_u$, then  $\mathbb{E}[\tau_3^o]<\mathbb{E}[\tau_A]$. 
		\item If $p=p_u$, then  $\mathbb{E}[\tau_A]=\mathbb{E}[\tau_3^o]$.
		\item If $p_u<p<35/53$, then  $\mathbb{E}[\tau_A]<\mathbb{E}[\tau_3^o]<\infty$.
		\item If $35/53\leq p< 5/7$, then $\mathbb{E}[\tau_A]<\mathbb{E}[\tau_3^o]=\infty$.
		\item If $p\geq5/7$, then  $\mathbb{E}[\tau_A]=\mathbb{E}[\tau_3^o]=\infty$.
		\end{itemize}
Moreover, from numerical approximations we obtain that $p_l\approx 0.4724$ and $p_u\approx 0.6529$. Finally, observe that the phase transition in $p$ for $\lambda = 0.4$ does not occur for all $\lambda>0.$ For example (see Figure~\ref{fig:otimo3}), for $\lambda=0.6$ we have that $\mathbb{E}[\tau_A]<\mathbb {E}[\tau_3^o]$ for all $p<1/(\lambda+1)$.
\end{exa}

The next result considers the Artalejo model and the model with independent dispersion and $d=3$ when both models die out almost surely.

\begin{prop}\label{indepte3} Assume $p<\min\left\{\frac{1}{\lambda+1}, \frac{\lambda + 3}{ 2\lambda^2 + 3 \lambda + 3}\right\}$. Then $\mathbb{E}[\tau_A]<\mathbb{E}[\tau_3^i]$ in and only if  
\begin{equation}\label{eq:indepte3}	
	\frac{2h(\lambda, p)}{(1-p-\lambda p)( \lambda p + 1)(2 \lambda + 3)(\lambda + 3)}<\ln\left[\frac{g(\lambda, p)+h(\lambda, p)}{g(\lambda, p)-h(\lambda, p)}\right], 
\end{equation}
where $g(\lambda, p)$ and $h(\lambda, p)$ are given in (\ref{function_g}) and (\ref{function_h}), respectively. Moreover,  \linebreak $\mathbb{E}[\tau_A]=\mathbb{E}[\tau_3^i]$ if and only if we have an equality in (\ref{eq:indepte3}).
\end{prop}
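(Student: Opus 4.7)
The plan is to reduce Proposition~\ref{indepte3} to straightforward algebraic manipulation of the two closed-form expressions already available from Theorem~\ref{th:semdisptime} and Theorem~\ref{MCItime}$(ii)$, exactly as was done for Propositions~\ref{optimo2}, \ref{indepte2} and \ref{optimo3}. Under the standing hypothesis $p<\min\bigl\{\tfrac{1}{\lambda+1},\tfrac{\lambda+3}{2\lambda^2+3\lambda+3}\bigr\}$, both extinction means are finite, so
$$\mathbb{E}[\tau_A]=\frac{1}{1-p-\lambda p},\qquad \mathbb{E}[\tau_3^i]=\frac{(\lambda p+1)(2\lambda+3)(\lambda+3)}{2h(\lambda,p)}\ln\!\left[\frac{g(\lambda,p)+h(\lambda,p)}{g(\lambda,p)-h(\lambda,p)}\right].$$
The core step is then to substitute these into $\mathbb{E}[\tau_A]<\mathbb{E}[\tau_3^i]$, clear the positive factor $(\lambda p+1)(2\lambda+3)(\lambda+3)/[2h(\lambda,p)]$, and obtain inequality~(\ref{eq:indepte3}); the ``if and only if'' follows because all manipulations preserve the inequality direction. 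Replacing every $<$ by $=$ in the argument yields the equality-case statement at the end of the proposition.

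The only real work is to verify that every factor I divide or multiply by is strictly positive, so that the equivalence is genuine and the two statements in the proposition really are equivalent. Concretely, I would record the following positivity observations, all valid under the stated hypothesis: (a) $1-p-\lambda p>0$, because $p<\tfrac{1}{\lambda+1}$; (b) the factors $\lambda p+1$, $2\lambda+3$ and $\lambda+3$ are trivially positive; (c) $h(\lambda,p)>0$, for which one checks that the radicand $p(\lambda+1)(p\lambda^2+4\lambda+6-3p)(\lambda+3)$ is positive — this uses $p<1$, which gives $p\lambda^2+4\lambda+6-3p>6-3p>3>0$; (d) $g(\lambda,p)>h(\lambda,p)>0$, which is what is needed for the logarithm in Theorem~\ref{MCItime}$(ii)$ to be well defined and real, and which is guaranteed by the condition $p<\tfrac{\lambda+3}{2\lambda^2+3\lambda+3}$ (equivalent to $g(\lambda,p)^2-h(\lambda,p)^2>0$ together with $g(\lambda,p)>0$ at the endpoint).

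With these positivity facts in hand, the equivalence
$$\frac{1}{1-p-\lambda p}<\frac{(\lambda p+1)(2\lambda+3)(\lambda+3)}{2h(\lambda,p)}\ln\!\left[\frac{g(\lambda,p)+h(\lambda,p)}{g(\lambda,p)-h(\lambda,p)}\right]$$
$$\Longleftrightarrow\frac{2h(\lambda,p)}{(1-p-\lambda p)(\lambda p+1)(2\lambda+3)(\lambda+3)}<\ln\!\left[\frac{g(\lambda,p)+h(\lambda,p)}{g(\lambda,p)-h(\lambda,p)}\right]$$
is immediate; the right-hand side is exactly (\ref{eq:indepte3}), finishing the proof.

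The main obstacle is not conceptual but clerical: the sign verification in step~(c)--(d) above and, if one wishes to make the paper self-contained, a short check that $g-h>0$ under the tight assumption $p<\tfrac{\lambda+3}{2\lambda^2+3\lambda+3}$. Everything else is a direct substitution, and no probabilistic argument beyond the two cited theorems is required.
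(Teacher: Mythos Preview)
Your proposal is correct and follows exactly the approach the paper takes: the paper simply states that Proposition~\ref{indepte3} is a consequence of Theorems~\ref{th:semdisptime} and \ref{MCItime}$(ii)$, and your argument spells out that deduction (including the positivity checks that justify the direction-preserving manipulations) in more detail than the paper itself provides.
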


Proposition~\ref{indepte3} is a consequence of Theorems~\ref{th:semdisptime} and \ref{MCItime}$(ii)$. From Proposition~\ref{indepte3} we can conclude that independent dispersion is a better strategy compared to non-dispersion, when the parameters $(\lambda, p)$ fall in the gray region of Figure~\ref{fig:indepte3}. The opposite (non-dispersion is a better strategy than independent dispersion) holds in the yellow region. Observe that from Theorems~\ref{th:semdisptime} and \ref{MCItime}$(ii)$, we also have that:
	\begin{itemize}
		\item If $\lambda>1$ and $\frac{\lambda + 3}{2 \lambda^2 + 3 \lambda + 3}\leq p<\frac{1}{\lambda+1}$, then  $\mathbb{E}[\tau_A] < \infty$ and $\mathbb{E}[\tau_3^i] = \infty.$  
		\item If $\lambda<1$ and $\frac{1}{\lambda+1} \leq p<\frac{\lambda + 3}{2 \lambda^2 + 3 \lambda + 3}$, then  $\mathbb{E}[\tau_3^i] < \infty$ and $\mathbb{E}[\tau_A] = \infty.$
	\end{itemize}	
	Junior \textit{et al}~\cite{JMR2020} showed that for the extinction probabilities in the white region of Figure~\ref{fig:otimo3} there are two possible behaviors. For the region (I) we have that $\psi_3 ^ i <\psi_A $; in the region (II) we have that $ \psi_3 ^ i> \psi_A. $ 

\begin{figure}[ht]
	\begin{tabular}{ccc}
		$\lambda$ & \parbox[c]{9cm}{\includegraphics[trim={1.1cm 1.3cm 0cm 0cm}, clip, width=9cm]{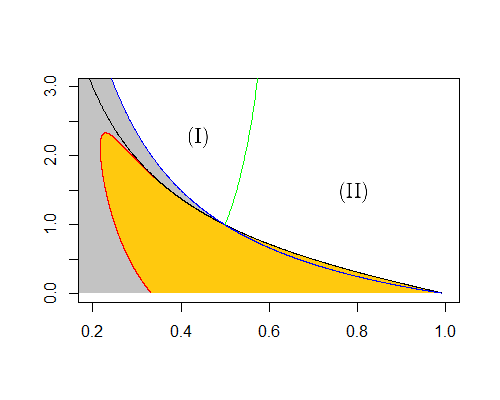}} & 
		\begin{tabular}{l}
			\textbf{\textcolor{red}{------}} Equality in (\ref{eq:indepte3})\\ \\ 
			\textbf{\textcolor{blue}{------}}  $p=\frac{1}{\lambda +1}$\\ \\ 
			\textbf{\textcolor{black}{------}}  $p=\frac{\lambda + 3}{ 2\lambda^2 + 3 \lambda + 3}$
			\\ \\ 
			\textbf{\textcolor{green}{------}}  $p=\frac{2(\lambda + 1)}{ 3\lambda + 5}$
		\end{tabular} \\
		& $p$
	\end{tabular}
	\caption{In the gray region, $\mathbb{E}[\tau_A]<\mathbb{E}[\tau_3^i]$. In the yellow region, $\mathbb{E}[\tau_A]>\mathbb{E}[\tau_3^i]$.}
	\label{fig:indepte3}
\end{figure}

	\begin{exa}\label{ex_indepte3} Both processes, $C(2.2,p)$ and $C_3^i(2.2,p)$, die out if and only if $p\leq65/241$. 	
		In this case, considering (\ref{eq:indepte3}), we obtain $p_l\approx 0.2174$ and $p_u\approx 0.2594$ such that: 
		\begin{itemize}
			\item If $0<p<p_l$, then $\mathbb{E}[\tau_A]<\mathbb{E}[\tau_3^i]$.
			\item If $p=p_l$, then $\mathbb{E}[\tau_A]=\mathbb{E}[\tau_3^i]$.  
			\item If $p_l<p<p_u$, then  $\mathbb{E}[\tau_3^i]<\mathbb{E}[\tau_A]$. 
			\item If $p=p_u$, then  $\mathbb{E}[\tau_A]=\mathbb{E}[\tau_3^i]$.
			\item If $p_u<p<65/241$, then  $\mathbb{E}[\tau_A]<\mathbb{E}[\tau_3^i]<\infty$.
			\item If $65/241\leq p< 5/16$, then $\mathbb{E}[\tau_A]<\mathbb{E}[\tau_3^i]=\infty$.
			\item If $p\geq5/16$, then  $\mathbb{E}[\tau_A]=\mathbb{E}[\tau_3^i]=\infty$.
		\end{itemize}  
The phase transition in $p$ observed for $\lambda =2.2$ does not occur for all $\lambda>0.$ For example (see Figure~\ref{fig:indepte3}), for $\lambda=3$ we have that $\mathbb{E}[\tau_A]<\mathbb {E}[\tau_3^i]$ for all $p<1/(\lambda+1)$.
\end{exa}

\subsection{Non-dispersion vs dispersion without spatial restriction.\\}

The following result establishes a comparison between the mean extinction times for $\mc$, the model without dispersion and $C_*(\lambda, p)$, the model with dispersion without spatial restriction. We restrict our attention to where both models die out almost surely, more precisely when  $p<\min\left\{\frac{1}{\lambda+1},\frac{1}{\lambda^2 +  \lambda + 1}\right\}=\frac{1}{\lambda^2 +  \lambda + 1}.$
	\begin{prop}\label{disp}
		Assume $p<\frac{1}{\lambda^2+\lambda+1}$.  Then,
		$\mathbb{E}[\tau_A]< \mathbb{E}[\tau_*]$ if and only if   
		\begin{equation}\label{eq:disp}
			\frac{-\lambda (\lambda p+1)}{(\lambda+1)(1-p-\lambda p)}>\ln\left[1-\frac{\lambda(\lambda p+1)}{(\lambda+1)(1-p)}\right].
		\end{equation}
		Moreover, $\mathbb{E}[\tau_A]= \mathbb{E}[\tau_*]$ if and only if we have an equality in (\ref{eq:disp}).
	\end{prop}
	Proposition~\ref{disp} is a consequence of Theorems~\ref{th:semdisptime} and \ref{th:comdisptime}. From Proposition~\ref{disp} we can conclude that  dispersion is a better strategy compared to
	non-dispersion, when the parameters $(\lambda, p)$ fall in the gray region of Figure~\ref{fig:semrestri}. The opposite (non-dispersion is a better strategy than dispersion) holds in the yellow region. Observe that from Theorems~\ref{th:semdisptime} and \ref{th:comdisptime}, we also have that:
	\begin{itemize}
		\item If $\frac{1}{\lambda^2+\lambda+1}\leq p<\frac{1}{\lambda + 1}$, then  $\mathbb{E}[\tau_A] < \mathbb{E}[\tau_*] = \infty.$ 
		\item If $ p\geq\frac{1}{\lambda + 1}$, then $\mathbb{E}[\tau_A] = \mathbb{E}[\tau_*] = \infty.$
	\end{itemize}	
	Junior \textit{et al}~\cite[Remark 2.7]{JMR2016} showed that the extinction probabilities in the white region of Figure~\ref{fig:semrestri} satisfies $\psi_*<\psi_A.$ Thus, in the white region, dispersion is a better strategy than non-dispersion. 
	\begin{figure}[ht]
		\begin{tabular}{ccc}
			$\lambda$ & \parbox[c]{9cm}{\includegraphics[trim={1.1cm 1.3cm 0cm 0cm}, clip, width=9cm]{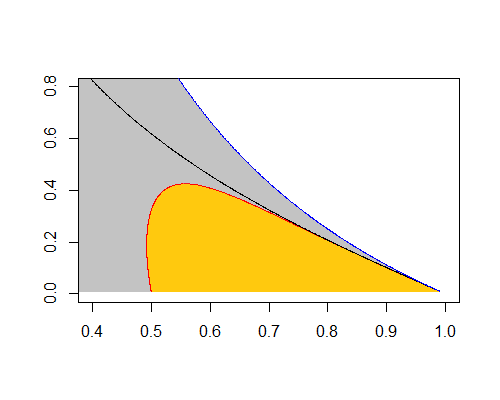}} & 
			\begin{tabular}{l}
				\textbf{\textcolor{red}{------}} Equality in (\ref{eq:disp})\\ \\ 
				\textbf{\textcolor{blue}{------}}  $p=\frac{1}{\lambda +1}$\\ \\ 
				\textbf{\textcolor{black}{------}}  $p=\frac{1}{\lambda^2 + \lambda + 1}$
			\end{tabular} \\
			& $p$
		\end{tabular}
		\caption{In the gray region, $\mathbb{E}[\tau_A]<\mathbb{E}[\tau_*]$. In the yellow region, $\mathbb{E}[\tau_A]>\mathbb{E}[\tau_*]$.}
		\label{fig:semrestri}
	\end{figure}
\begin{exa}\label{ex_disp} Both processes, $C(0.4,p)$ and $C_*(0.4,p)$, die out if and only if $p\leq25/39$. 	
	In this case, considering (\ref{eq:disp}), we obtain $p_l\approx 0.5209$ and $p_u\approx 0.6118$, therefore 
	\begin{itemize}
		\item If $0<p<p_l$, then $\mathbb{E}[\tau_A]<\mathbb{E}[\tau_*]$.
		\item If $p=p_l$, then $\mathbb{E}[\tau_A]=\mathbb{E}[\tau_*]$.  
		\item If $p_l<p<p_u$, then  $\mathbb{E}[\tau_*]<\mathbb{E}[\tau_A]$. 
		\item If $p=p_u$, then  $\mathbb{E}[\tau_A]=\mathbb{E}[\tau_*]$.
		\item If $p_u<p<25/39$, then  $\mathbb{E}[\tau_A]<\mathbb{E}[\tau_*]<\infty$.
		\item If $25/39\leq p< 5/7$, then $\mathbb{E}[\tau_A]<\mathbb{E}[\tau_*]=\infty$.
		\item If $p\geq5/7$, then  $\mathbb{E}[\tau_A]=\mathbb{E}[\tau_*]=\infty$.
	\end{itemize}  
The phase transition in $p$ observed for $\lambda = 0.4$ does not occur for all $\lambda>0.$ For example (see Figure~\ref{fig:semrestri}), for $\lambda=0.5$ we have that $\mathbb{E}[\tau_A]<\mathbb {E}[\tau_*]$ for all $p<1/(\lambda+1)$.
\end{exa}

\begin{obs}
	From Remark~\ref{otimo-infinito} and the monotonicity in $d$ (by coupling arguments)  of $\tau_d^o$, we have that $\mathbb {E}[\tau_*]\geq\mathbb{E}[\tau_3^o]\geq\mathbb{E}[\tau_2^o].$ Thus, the yellow region of Figure~\ref{fig:semrestri} is contained in the yellow region of Figure~\ref{fig:otimo3}, which in its turn is contained in the yellow region of Figure~\ref{fig:otimo2}. Consequently, the regions of the parametric space $p \times \lambda$ where $\mathbb{E}[\tau_A]>\mathbb{E}[\tau_d^o]$ (the yellow regions) tends from above to the yellow region of Figure~\ref{fig:semrestri} as $d$ tends to infinity.
\end{obs}

\subsection{Conclusion.\\}
In general, observe that the model without dispersion (with only one colony) has a catastrophe rate of 1 while the models with dispersion (multiple colonies) has a catastrophe rate of $n$ whenever there are $n$ colonies. Moreover, a catastrophe is more likely to wipe out a smaller colony than a larger one. On the other hand
multiple colonies give multiple chances for survival and this may be a critical advantage
of the multiple colonies model over the single colony model. Also note that in the models with dispersion and spatial restriction, during the dispersion some individuals could end up at the same spatial location. In this case, all but one individuals will die. As a result there is a trade-off: On the one hand, dispersion creates independent populations and thus promotes survival. On the other hand, dispersion could lead to death due to competition for space.

Therefore, our results show that dispersion may be or may not be an advantage for prolongs population's life span depending (not trivially) on the dispersion type, the spatial restrictions,  the growth rate of the colonies, and the probability that each individual exposed to catastrophe survives.

\section{Proofs}

\begin{lem}\label{lemaux}
Let $(Y_t)_{t\geq 0}$ a continuous time branching process, where each particle survives an exponential time of rate 1 and right before death produces a random number of particles  with probability generating function
\[f(s)=\sum_{k=0}^{\infty}p_ks^k.\]
Suppose that $Y_0=1$ and $f'(1)\leq 1$.  Let $\tau=\inf\{t>0:Y_t=0\}$,  the extinction time of the process $(Y_t)_{t\geq 0}$. 
\begin{itemize}
	\item[$(i)$] If {$p_2\neq 0$ and $p_k= 0$ for $k\geq 3$,} then
		$$\mathbb{E}[\tau]=
		\left\{\begin{array}{cl}
		\displaystyle\frac{1}{p_2}\ln\left(\frac{p_0}{p_0-p_2}\right) & \text{, if } f'(1)<1,\\ \\
		\infty & \text{, if } f'(1)=1.
		\end{array}
		\right.$$
	\item[$(ii)$] If {$p_3\neq 0$ and $p_k= 0$ for $k\geq 4$,} then
		$$\mathbb{E}[\tau]=
		\left\{\begin{array}{cl}
		\displaystyle\frac{1}{\sqrt{4p_0p_3+(p_2+p_3)^2}}\ln\left[\frac{2p_0-p_2-p_3+\sqrt{4p_0p_3+(p_2+p_3)^2}}{2p_0-p_2-p_3-\sqrt{4p_0p_3+(p_2+p_3)^2}}\right] & \text{, if } f'(1)<1,\\ \\
		\infty & \text{, if } f'(1)=1.
		\end{array}
		\right.$$
	{\item[$(iii)$] If $p_0=\beta$ and $p_n=\alpha c^n$ for $n\geq 1$, where $\alpha, \beta$ and $c$ are positive constants, then
	$$\mathbb{E}[\tau]=
	\left\{\begin{array}{cl}
	1-\displaystyle\frac{1-\beta}{c}\ln\left[1-\frac{c}{\beta}\right] & \text{, if } f'(1)<1,\\ \\
	\infty & \text{, if } f'(1)=1.
	\end{array}
	\right.$$}

\end{itemize}
\end{lem}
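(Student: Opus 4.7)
The plan is to reduce the expected extinction time to a one-dimensional integral by passing through the backward equation for the PGF. First I would recall that for a continuous-time Markov branching process with unit death rate and offspring PGF $f$, the function $u(t):=\mathbb{P}(Y_t=0\mid Y_0=1)$ satisfies the initial-value problem
$$u'(t)=f(u(t))-u(t),\qquad u(0)=0,$$
which follows from conditioning on whether the ancestor has died in the infinitesimal interval $[0,h]$. Since $f'(1)\le 1$, the process is (sub)critical and $u(t)\nearrow 1$ as $t\to\infty$, with the convergence obtained by studying the fixed points of $f(s)=s$ in $[0,1]$.

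Next I would write $\mathbb{E}[\tau]=\int_0^\infty(1-u(t))\,dt$ and change variables to $u$ via $dt=du/(f(u)-u)$, obtaining
$$\mathbb{E}[\tau]=\int_0^1\frac{1-u}{f(u)-u}\,du.$$
Because $f(1)=1$, the factor $(1-u)$ divides $f(u)-u$, and a direct polynomial division gives
$$f(u)-u=(1-u)\bigl(p_0-(p_2+p_3)u-p_3u^2\bigr)=:(1-u)g(u),$$
so that $\mathbb{E}[\tau]=\int_0^1 du/g(u)$. A useful consistency check is $g(1)=p_0-p_2-2p_3=1-f'(1)$.

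In case (i), $p_3=0$ and $g(u)=p_0-p_2u$ is affine, so a direct antiderivative yields the stated $(1/p_2)\ln(p_0/(p_0-p_2))$. In case (ii), $p_3\ne 0$ and $g$ is a genuine quadratic whose discriminant $(p_2+p_3)^2+4p_0p_3$ is strictly positive when $f'(1)<1$ (it equals $(1-f'(1))^2$ plus nonnegative terms). Completing the square and applying the standard formula
$$\int\frac{du}{a^2-x^2}=\frac{1}{2a}\ln\left|\frac{a+x}{a-x}\right|+\mathrm{const}$$
produces a logarithmic antiderivative. For the critical case $f'(1)=1$, one has $g(1)=0$ with $g'(1)\ne 0$, so the integrand behaves like a constant multiple of $1/(1-u)$ near $u=1$ and the integral diverges, giving $\mathbb{E}[\tau]=\infty$.

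The only non-routine step is the simplification in case (ii): after evaluating the antiderivative at $u=0$ and $u=1$, one obtains a ratio of four linear combinations of $\Delta:=\sqrt{(p_2+p_3)^2+4p_0p_3}$, $B:=p_2+p_3$ and $C:=p_3$. The identity $\Delta^2-B^2=4p_0 C$ is exactly what is needed to factor $(\Delta+B+2C)(\Delta-B)=2C(2p_0-B+\Delta)$ and $(\Delta-B-2C)(\Delta+B)=2C(2p_0-B-\Delta)$, collapsing the expression into the stated form. I expect this algebraic bookkeeping to be the main obstacle, but it is purely mechanical and does not involve any further probabilistic input.
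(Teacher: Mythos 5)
Your proposal is correct and follows essentially the same route as the paper: both reduce $\mathbb{E}[\tau]$ to $\int_0^1\frac{1-y}{f(y)-y}\,dy$ (the paper cites Narayan for this identity, whereas you derive it from the backward equation $u'=f(u)-u$), then cancel the factor $(1-y)$ and evaluate the resulting rational integral, with your endpoint simplification via $\Delta^2-B^2=4p_0p_3$ matching the paper's use of the tabulated antiderivative. One cosmetic slip: the discriminant $(p_2+p_3)^2+4p_0p_3$ is not in general $(1-f'(1))^2$ plus nonnegative terms, but its positivity is immediate anyway since $p_3>0$ in case $(ii)$.
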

\begin{proof}[Proof of Lemma \ref{lemaux}]
If $f'(1)\leq1$, then $\mathbb{P}[\tau<\infty]=1$. Thus, from Narayan~\cite{PN1982}, we have that	
	\begin{equation}\label{Narayan}
	\mathbb{E}[\tau]=\displaystyle\int_0^1\frac{1-y}{f(y)-y}dy.
	\end{equation}

\begin{itemize}
	\item[(i)]
	If {$p_2\neq 0$ and $p_k= 0$ for $k\geq 3$,} from (\ref{Narayan}) we obtain that 
	$$\begin{array}{lll}
	\mathbb{E}[\tau]
	&=&\displaystyle\int_0^1\frac{1-y}{p_0+p_1y+(1-p_0-p_1)y^2-y}dy\\ \\
	&=&\displaystyle\int_0^1\frac{1}{p_0-(1-p_0-p_1)y}dy\\ \\
	&=&\displaystyle\int_0^1\frac{1}{p_0-p_2y}dy\\ \\
	&=&\left\{\begin{array}{cl}
	\displaystyle\frac{1}{p_2}\ln\left(\frac{p_0}{p_0-p_2}\right) & \text{, if } f'(1)<1,\\ \\
	\infty & \text{, if } f'(1)=1.
	\end{array}
	\right.

	\end{array}$$
	\item[$(ii)$] 
	If {$p_3\neq 0$ and $p_k= 0$ for $k\geq 4$,} from (\ref{Narayan}) we obtain that 
	$$\begin{array}{lll}
	\mathbb{E}[\tau]
	&=&\displaystyle\int_0^1\frac{1-y}{p_0+p_1y+p_2y^2+(1-p_0-p_1-p_2)y^3-y}dy\\ \\
	&=&\displaystyle\int_0^1\frac{1}{p_0-(1-p_0-p_1)y-(1-p_0-p_1-p_2)y^2}dy\\ \\
	&=&\displaystyle\int_0^1\frac{1}{p_0-(p_2+p_3)y-p_3y^2}dy \\ \\
	&=&\left\{\begin{array}{cl}
	\displaystyle\frac{1}{\sqrt{4p_0p_3+(p_2+p_3)^2}}\ln\left[\frac{2p_0-p_2-p_3+\sqrt{4p_0p_3+(p_2+p_3)^2}}{2p_0-p_2-p_3-\sqrt{4p_0p_3+(p_2+p_3)^2}}\right] & \text{, if } f'(1)<1,\\ \\
	\infty & \text{, if } f'(1)=1,
	\end{array}
	\right.
	\end{array}$$
where the last equality has been obtained using
	$$\int \frac{dy}{ay^2+by+c}= \displaystyle\frac{1}{\sqrt{b^2-4ac} }\ln\left[\frac{2ay+b-\sqrt{b^2-4ac}}{2ay+b+\sqrt{b^2-4ac}}\right]+ \text{constant},   
$$
 when $b^2-4ac>0$, see  Prudnikov~\textit{et al}~\cite[Eq. 1.2.8.13]{PBY2002}. In our case, $a=-p_3$, $b=-(p_2+p_3)$ and $c=p_0.$

{\item[$(iii)$] If $p_0=\beta$ and $p_n=\alpha c^n$ for $n\geq 1$, we have
\begin{equation}
\beta=\frac{1-c-\alpha c}{1-c}
\end{equation}
and	 
$$f(s)=\sum_{k=0}^{\infty}p_ks^k=\beta+\frac{\alpha c s}{1-cs}.$$
Thus, 
\begin{eqnarray*}
	\mathbb{E}[\tau]
	&=&\int_0^1 \frac{(1-y)(1-cy)}{cy^2-(\beta +c)y+\beta}\, dy\\ \\
	&=&\int_0^1 \left(1+\frac{1-\beta}{\beta -cy}\right)dy\\ \\
	&=&\left\{\begin{array}{cl}
		1-\displaystyle\frac{1-\beta}{c}\ln\left(1-\frac{c}{\beta}\right) & \text{, if } f'(1)<1,\\ \\
		\infty & \text{, if } f'(1)=1.
	\end{array}
	\right.
\end{eqnarray*}
}
\end{itemize}
\end{proof}

In order to prove Theorems~\ref{MCOtime}, \ref{MCItime} and \ref{th:comdisptime}, observe that  the probability distribution of the number of survivors right after the catastrophe (but before the dispersion) is given by
	\[\mathbb{P}(N=0) =\beta, \,\mathbb{P}(N = n) = \alpha c^n, n =1,2,\ldots,\] where
	\begin{equation}\label{beta_lambda_c}
\beta = \frac{1-p}{\lambda p + 1}, \ \alpha = \frac{(\lambda + 1)p}{\lambda ( \lambda p +1)} \hbox{ and } \ c = \frac{\lambda}{\lambda + 1}.
	\end{equation} 
For details see Machado~\textit{et al}~\cite[Section 2.2]{MRV2018}.

\begin{proof}[Proof of Theorem~\ref{MCOtime}] 
Let $Z_t$ be the number of colonies at time $t$ in the model $\MCO$. Observe that $Z_t$ is a continuous-time branching process with  $Z_0=1$. Each particle (colony) in $Z_t$ survives an exponential time of rate 1 and right before death produces $k\leq d$ particles (colonies are created right after a catastrophe) with probability $p_k$ given by

$$p_k=\left\{
\begin{array}{cl}
\beta & \text{, if }  k=0;\\ 
\alpha c^k & \text{, if } 1\leq k<d; \\ 
1-\beta-\displaystyle\frac{\alpha c(1-c^{d-1})}{1-c} & \text{, if } k= d. 
\end{array}\right.
$$	
Moreover, $\tau_d^o=\inf\{t>0:Z_t=0\}$.	 \\	
	
$\bullet$ For $d=2$, we have that  
	\[
	p_0 = \mathbb{P}(N=0) = \beta, \ p_1 = \mathbb{P}(N=1)= \alpha c \hbox{ and } p_2 = 1- \beta - \alpha c.
	\]
Furthermore, the condition $p< \frac{1}{\lambda +1}$  is equivalent to $p_1+2p_2<1$.  Thus, from Lemma~\ref{lemaux}$(i)$, we have that  	
	$$\begin{array}{lll}
	\mathbb{E}[\tau_2^o]&=&\displaystyle\frac{1}{p_2}\ln\left(\frac{p_0}{p_0-p_2}\right)\\ \\
	&=&\left(1 + \frac{1}{\lambda p}\right)\ln\left(\frac{1-p}{1-p-\lambda p}\right),
	\end{array}$$
where the last line has been obtained using (\ref{beta_lambda_c}).\\

For $p= \frac{1}{\lambda +1}$, we have that  $p_1+2p_2=1$.  Thus, from Lemma~\ref{lemaux}$(i)$, it follows that  $	\mathbb{E}[\tau_2^o]=\infty$.\\

$\bullet$ For $d=3$, we have that 
	\[
	p_0 = \mathbb{P}(N=0) = \beta, \ p_1 = \mathbb{P}(N=1)= \alpha c, \ p_2 = \mathbb{P}(N=2)= \alpha c^2, \hbox{ and } p_3 = 1- \beta - \alpha c-\alpha c^2.
	\]
Furthermore, the condition $p< \frac{\lambda +1}{2\lambda^2+2\lambda+1}$  is equivalent to $p_1+2p_2+3p_3<1$. Thus, from Lemma~\ref{lemaux}$(ii)$, we have that  		
	  	
	$$\begin{array}{lll}
	\mathbb{E}[\tau_3^o]&=&\displaystyle\frac{1}{\sqrt{4p_0p_3+(p_2+p_3)^2}}\ln\left[\frac{2p_0-p_2-p_3+\sqrt{4p_0p_3+(p_2+p_3)^2}}{2p_0-p_2-p_3-\sqrt{4p_0p_3+(p_2+p_3)^2}}\right]\\\\
	&=&
	\displaystyle
	\frac{\lambda p +1}{\lambda p} 
	\sqrt{\frac{p(\lambda+1)}{4+\lambda p-3p}}\,
	\ln\left[\frac{(2-2p-\lambda p)\sqrt{p(\lambda  +1)}+\lambda p\sqrt{4+\lambda p -3p}}{(2-2p-\lambda p)\sqrt{p(\lambda  +1)}-\lambda p\sqrt{4+\lambda p -3p}}\right],
	\end{array}$$
where the last line has been obtained using (\ref{beta_lambda_c}).\\

If $p= \frac{\lambda +1}{2\lambda^2+2\lambda+1}$, we have that $p_1+2p_2+3p_3=1$.  Thus, from Lemma~\ref{lemaux}$(ii)$, it follows that  $	\mathbb{E}[\tau_3^o]=\infty$.
\end{proof}

\begin{proof}[Proof of Theorem~\ref{MCItime}] Analogously to the proof of Theorem~\ref{MCOtime}. In this case, 
	$$p_k=\left\{
	\begin{array}{cl}
	\beta & \text{, if }  k=0;\\ 
	\alpha\dbinom{d}{k}\displaystyle\sum_{n=k}^{\infty} T(n,k)\left(\frac
	{c}{d}\right)^n   & \text{, if } 1\leq k<d; \\ 
	1- \displaystyle\sum_{j=0}^{d-1}p_j & \text{, if } k= d. 
	\end{array}\right.
	$$

$\bullet$ If $d=2$, we have that 	
	\[
	p_0 = \beta, \ p_1 = \frac{2\alpha c}{2-c} \hbox{ and } p_2 = 1- \beta - \frac{2\alpha c}{2-c}.
	\]

$\bullet$ If $d=3$, we have that 
	\[
	p_0 = \beta,  p_1 = \frac{3\alpha c}{3-c}, p_2= \frac{6\alpha c^2}{(3-2c)(3-c)}  \hbox{ and } p_3 = 1- \beta- \frac{3\alpha c}{3-c}-\frac{6\alpha c^2}{(3-2c)(3-c)}.
	\]

\end{proof}

\begin{proof}[Proof of Theorem~\ref{th:comdisptime}] Analogously to the proof of Theorem~\ref{MCOtime}. In this case, 
$p_0 =\beta$  and  $p_k = \alpha c^k$ for $k \geq 1.$
\end{proof}

\section{Acknowledgments} The authors are thankful for the anonymous referee for a careful reading and many useful suggestions that helped to improve the paper.

\end{document}